\begin{document}

\theoremstyle{definition}
\newtheorem{definition}{\sc Definition}
\theoremstyle{plain}
\newtheorem{lemma}{\sc Lemma}
\newtheorem{theorem}{\sc Theorem}
\newtheorem{proposition}{\sc Proposition}
\newtheorem{corollary}{\sc Corollary}
\newtheorem*{problem}{\sc Problem}
\theoremstyle{remark}
\newtheorem{remark}{Remark}
\newtheorem{example}{Example}
\newtheorem{property}{Property}


\title{On Order-Preserving and Verbal Embeddings\\ of the Group $\mathbb{Q}$}

\author{Arman Darbinyan \quad  Vahagn H. Mikaelian}

\date{}

\maketitle

\begin{abstract}
\noindent
We show that there is an order-preserving embedding of the additive group of rational numbers $\mathbb{Q}$ into a 2-generator group $G$. The group $G$ can be chosen to be a solvable group $G$ of length 3, which is a minimal result in the sense that it cannot be chosen to be neither solvable of length 2, nor a nilpotent group. For any non-trivial word set $V \subseteq F_\infty$  there is an order-preserving verbal embedding of  $\mathbb{Q}$ into a 2-generator group $G$. The embeddings constructed are subnormal.
\end{abstract}

\let\thefootnote\relax\footnote{{\bf ~\\Keywords}. Fully ordered group, verbal embedding of group, variety of groups, free nilpotent group, wreath products.\\[2pt]
{\bf  2010 Math.\ Subject Classification}. 20D35, 20E05, 20E10, 20E22, 20F18, 20F60.

\par
}

\section{Introduction}
\label{section introduction}

The aim of this note is to add some additional properties to the explicit embedding of the additive group of rational numbers $\mathbb{Q}$ into a 2-generator group $G$ we constructed in~$\cite{mikaelian05}$. Existence of such an explicit embedding of $\mathbb{Q}$ was asked by de la Harpe and Bridson in Problem 14.10 (b) of the Kourovka Notebook~$\cite{kourovka}$, and the construction of~$\cite{mikaelian05}$ was to give a positive answer to this question.

It is natural to ask which properties of $\mathbb{Q}$ can be ``inherited'' by $G$, and which additional options can the embedding have. One of most natural properties characterizing $\mathbb{Q}$ is the linear order of rational numbers and, thus, it is reasonable to ask if the group $G$ can be ordered in such a way that its order continues the natural order of rational numbers in the isomorphic image of $\mathbb{Q}$ in $G$.
Clearly, here we are interested in such a linear order on $G$ which is adjusted with the multiplication of the group $G$, that is, is a full order relation in the sense of~\cite{neumann-order, LeviOnOrderedGroups1, LeviOnOrderedGroups2} (see definitions below).

The next option we add to the embedding is \textit{verbality}. For the non-trivial word set $V \subseteq F_\infty$ the embedding of the group $H$ into the group $G$ is said to be $V$-verbal if the isomorphic image of $H$ under this embedding lies in the verbal subgroup $V(G)$ (see definition below). Thus, the concept of  $V$-verbal embedding, suggested by Heineken~\cite{Heineken, HeinekenMikaelianOnnVEmb, Heineken normal},  is the wide generalization of the notion of embedding into the commutator subgroup, embedding into the member of the lower central series, embedding of the $n$'th derived subgroup, etc.. See also recent research on verbal embeddings in~\cite{SubnormalEmbeddingTheorems}-\cite{dissertation_survey}.

Another property of the embedding we deal with is how ``economical'' it can be in the following sense. The 2-generator group $G$, into which $\mathbb{Q}$ is embedded, is a solvable group of length $3$. We show that $G$ cannot be replaced neither by a finitely generated nilpotent group of any class, nor by a finitely generated solvable group of length 2 (that is, by any finitely generated metabelian group). The latter fact continues Neumann's example of Lemma 5.3 in~\cite{NeumannHNeumann}: the  quasi-cyclic group $Z(p^\infty)$ is an example of an infinitely generated abelian group, which cannot be embedded into a finitely generated metabelian group.

\vskip4mm

Since the notions used in this paper not always have standard definitions or notations in the literature, we bring here a brief list of definitions and references to the sources, where the detailed information and main properties can be found.

The group $G$ is \emph{fully ordered}, if a linear order relation $<$  is defined on $G$, such that for arbitrary elements $g_1, g_2 \in G$, $g_1 < g_2$ implies $ g_1 x < g_2 x$ and $x g_1 < x g_2$ for any $x \in G$.  In the literature these days it is familiar to call the fully ordered groups ``linearly ordered groups'', but we use the older terminology because here we  use   a few other linear order relations also, and some difference in terminology is useful.
An embedding $\varphi$ of the fully ordered group $H$ into the fully ordered group $G$ is said to be order-preserving, if for any  $h_1, h_2 \in H$ the relation $h_1 < h_2$ holds in $H$ if and only if $\varphi(h_1) < \varphi(h_2)$ holds in $G$.
For more information about fully ordered groups see ~\cite{neumann-order, LeviOnOrderedGroups1, LeviOnOrderedGroups2}.

\emph{Cartesian} wreath product $A \mbox{~Wr~} B$ of two groups $A$ and $B$ is the semidirect product $B \rightthreetimes A^B$,  where $A^B$ is the set of maps from $B$ to $A$, and $B$ acts on $A^B$ by the rule: for arbitrary $f: B \rightarrow A$, $b\in B$; $f^b(b_0)=f(b_0 b^{-1})$ for any $b_0 \in B$.
For more information about wreath products we refer to $\cite{neumann-varieties}$.

For a group $G$ and for a word set $V \subseteq F_\infty$ the $V$-verbal subgroup $V(G)$ of $G$ is the subgroup generated by all substitutions $v(g_1, \ldots, g_n) \in G$ for all words $v(x_1, \ldots, x_n) \in V$ and for all elements $g_1, \ldots, g_n \in G$. When it is clear from context which word set $V$ is assumed, we call the $V$-verbal subgroup a verbal subgroup. The variety corresponding to $V$ is the variety generated by the factor group $F_\infty / V(F_\infty)$ or, in other words, the set of all groups for which any $v(x_1, \ldots, x_n) \in V$ is an identity. A word set is said to be non-trivial if $V(F_\infty) \not= \{ 1 \}$.
For more information about word sets, varieties, identities see $\cite{neumann-varieties}$.

An embedding $\varphi$ of the group $H$ into the group $G$ is said to be subnormal, if $\varphi(H)$ is a subnormal subgroup in $G$, that is, if there is a finite series of subgroups $\varphi(H) = G_0 \le G_1 \le \dots \le G_n =G$, such that $G_{i-1}$ is a normal subgroup in $G_{i}$ for any $i = 1, \ldots , n$.


\section{The order-preserving explicit embedding construction}
\label{section first embedding}

The following is the main theorem of the current paper:

\begin{theorem}
\label{th. first embedding}
There is an order-preserving subnormal embedding of the additive group of rational numbers $\mathbb{Q}$ into a fully ordered 2-generator group $G$. The group $G$ can be chosen to be a solvable group $G$ of length 3.
\end{theorem}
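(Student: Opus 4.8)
The plan is to start from the explicit embedding of $\mathbb{Q}$ into a $2$-generator group from \cite{mikaelian05} and redo that construction inside the category of fully ordered groups, checking at each step that the groups involved carry compatible full orders and that the maps used are order-preserving. Recall that the construction of \cite{mikaelian05} realizes $\mathbb{Q}$ inside a group built from iterated wreath products: one first embeds $\mathbb{Q}$ into a metabelian group of the form $\mathbb{Z}\,\mathrm{Wr}\,\mathbb{Z}$-type (using the fact that $\mathbb{Q}$ is a direct limit of infinite cyclic groups and that successive inclusions $\mathbb{Z}\hookrightarrow\mathbb{Z}$ can be encoded by conjugation in a wreath product), and then one uses a further wreath-product layer together with a standard $2$-generator trick (embedding a suitable countable group into a $2$-generator group via $\mathrm{Wr}$) to cut the number of generators down to $2$. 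This is what forces solvability length $3$: each wreath layer adds one to the derived length, and we start from the abelian group $\mathbb{Q}$.

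The key points to establish, in order, are the following. First I would recall that $\mathbb{Q}$ is fully (indeed, Archimedean-linearly) ordered by its usual order, and that any torsion-free abelian group admits a full order; in particular all the base groups of the wreath products below will be orderable. Second, I would prove an order-theoretic lemma about wreath products: if $A$ and $B$ are fully ordered groups and $B$ is equipped with a fixed full order $<_B$, then the Cartesian wreath product $A\,\mathrm{Wr}\,B$ admits a full order extending the orders of $A$ (embedded as the ``coordinate at $1$'', or better as a constant-support copy) and of $B$. The order is the lexicographic/Hahn-type order: an element $(f,b)$ with $b\ne 1$ is declared positive according to the sign of $b$ in $B$; if $b=1$, one orders $f\in A^B$ by comparing, at the $<_B$-greatest point of $B$ where $f$ takes a non-trivial value, the value of $f$ there in $A$. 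One must check this is a well-defined linear order (this is where the well-orderedness direction of ``greatest non-trivial coordinate'' is used — for the \emph{restricted} wreath product it is automatic; for the Cartesian one, one restricts to the support of the relevant subgroup, or passes to a suitable subgroup with well-ordered supports, which is all that is needed since $\mathbb{Q}$ lands in such a subgroup) and that it is translation-invariant on both sides. Third, I would verify that each embedding used in the chain of \cite{mikaelian05} — the inclusions $\mathbb{Z}\hookrightarrow\mathbb{Z}$ realizing $\mathbb{Q}=\varinjlim\mathbb{Z}$, the resulting embedding of $\mathbb{Q}$ into the first wreath layer, and the $2$-generator embedding of a countable orderable group into $A\,\mathrm{Wr}\,B$ with $B$ infinite cyclic — is, with respect to the orders just defined, order-preserving; this is mostly a matter of choosing the orders on the base and top groups consistently (take $\mathbb{Z}$ with its usual order everywhere, and choose the indexing of the iterated wreath product so that the copy of $\mathbb{Q}$ sits inside the ``bottom'' coordinate subgroup, where the induced order is literally the order of $\mathbb{Q}$).

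Finally, subnormality comes for free from the same construction: in a wreath product $A\,\mathrm{Wr}\,B$ the base group $A^B$ is normal, and iterating, the copy of $\mathbb{Q}$ sits at the bottom of a short chain $\mathbb{Q}\trianglelefteq \cdots \trianglelefteq G$ of normal inclusions coming from the successive base subgroups; one records the explicit series, whose length is bounded by the number of wreath layers. I expect the main obstacle to be the second step — pinning down a full order on the (Cartesian) wreath product that simultaneously extends the given orders on base and top and is two-sided invariant, and making sure the relevant supports are well-ordered so that ``compare at the top non-trivial coordinate'' makes sense; once that lemma is in place, everything else is bookkeeping on top of \cite{mikaelian05}, together with the easy observation that the lexicographic order on $A^B$ restricted to the coordinate copy of $\mathbb{Q}$ is exactly the order of $\mathbb{Q}$.
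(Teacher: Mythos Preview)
Your plan is essentially the paper's own proof: the key lemma is exactly that a subgroup $X\le A\,\mathrm{Wr}\,B$ whose elements have well-ordered support inherits a full lexicographic order (the paper's Lemma~\ref{lemma main}, using the \emph{least} rather than greatest non-trivial coordinate), and one then checks that the explicit $2$-generator group $G=\langle \alpha,z\rangle\le (\mathbb{Q}\,\mathrm{Wr}\,C)\,\mathrm{Wr}\,Z$ from \cite{mikaelian05} has this property at both wreath layers, with subnormality coming from the base-subgroup chain just as you say. One correction to your recollection of \cite{mikaelian05}: the first layer is $\mathbb{Q}\,\mathrm{Wr}\,C$ with $\mathbb{Q}$ itself as passive group, not a $\mathbb{Z}\,\mathrm{Wr}\,\mathbb{Z}$-type group realizing $\mathbb{Q}$ as a limit of cyclics --- indeed Proposition~2 below shows $\mathbb{Q}$ embeds in no finitely generated metabelian group, so the intermediate metabelian group must already be infinitely generated.
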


Before giving the description of the embedding of $\mathbb{Q}$ with the mentioned properties, let us state a lemma 
which we will use later on.

\begin{lemma}
\label{lemma main}
Let $A$ and $B$ be fully ordered groups and $ X \leq A \mbox{~Wr~} B $.
If for each $b\varphi\in X$, $supp(\varphi)$ is well-ordered, then
$X$ can be fully ordered.

\end{lemma}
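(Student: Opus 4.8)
The plan is to define a full order on $X$ by a lexicographic construction that uses the given order of $B$ on the quotient and a Hahn-type order of $A$ on the base group. Write $\pi\colon B\rightthreetimes A^B\to B$ for the canonical projection, so that $\pi|_X$ has kernel $X\cap A^B$ and image a subgroup of the fully ordered group $B$. For $g=b\varphi\in X$ with $g\ne 1$ I would put $g$ in the positive cone $P$ exactly when either $b=\pi(g)\ne 1$ and $b>1$ in $B$, or $b=1$ (so $g=\varphi\in X\cap A^B$, and $supp(\varphi)$ is a non-empty well-ordered set by hypothesis) and $\varphi(b_0)>1$ in $A$, where $b_0=\min supp(\varphi)$. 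In other words, on $X\cap A^B$ one takes the ``sign of the least non-trivial coordinate'' order, and extends it lexicographically by the order of $\pi(X)\le B$ with the base group regarded as infinitesimal. This is the natural generalization, to subgroups with well-ordered supports of the unrestricted wreath product, of the classical orderability of restricted wreath products of ordered groups.

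It then suffices to check that $P$ satisfies the three positive-cone axioms, since a subset $P$ of a group with $P\cup P^{-1}\cup\{1\}=X$ a disjoint union, $P\cdot P\subseteq P$, and $g^{-1}Pg\subseteq P$ for all $g\in X$ defines a full order via $g<h\iff g^{-1}h\in P$. For the partition property: if $g=b\varphi\ne 1$ and $b\ne 1$ this is trichotomy in $B$; if $b=1$ it follows from $\varphi(b_0)\ne 1$ and trichotomy in $A$, using $supp(\varphi^{-1})=supp(\varphi)$ and $\varphi^{-1}(b_0)=\varphi(b_0)^{-1}$. For $P\cdot P\subseteq P$: take $g_1,g_2\in P$; if at least one of $\pi(g_1),\pi(g_2)$ is non-trivial then, both being $\ge 1$ in $B$ with one $>1$, we get $\pi(g_1g_2)>1$ and $g_1g_2\in P$; otherwise $g_1=\varphi$ and $g_2=\psi$ lie in $X\cap A^B$, and writing $b_0=\min\big(\min supp(\varphi),\,\min supp(\psi)\big)$, all coordinates of $\varphi\psi$ strictly below $b_0$ are trivial while the coordinate at $b_0$ equals $\varphi(b_0)$, $\psi(b_0)$, or $\varphi(b_0)\psi(b_0)$, in each case a product of positive elements of the ordered group $A$, hence positive and in particular non-trivial; thus $b_0=\min supp(\varphi\psi)$ and $\varphi\psi\in P$. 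For normality $g^{-1}Pg\subseteq P$: if $p\in P$ with $\pi(p)\ne 1$ this is the standard fact that the positive cone of a fully ordered group (here $B$) is normal; if $p=\psi\in X\cap A^B$, then writing $g=b\varphi$, conjugation by $g$ is the composite of coordinatewise conjugation by $\varphi$ and the support-shift action of $b$ (in one or the other order), and coordinatewise conjugation in the fully ordered group $A$ fixes its positive cone, while the shift by $b$ is an order-automorphism of $B$, hence carries the well-ordered set $supp(\psi)$ onto a well-ordered set, its minimum onto the minimum of the image, and leaves the value of $\psi$ there unchanged; so the leading coordinate of $g^{-1}\psi g$ is again positive and $g^{-1}\psi g\in P$.

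The point requiring a little care — and the only genuine obstacle — is the interaction of the $B$-action with well-ordering in the normality step, together with the observation that the coordinatewise minimum of two well-ordered supports is again attained in the multiplicativity step; both use precisely that $B$ is \emph{fully} (two-sidedly) ordered, so that right translations are order-automorphisms of $B$. The hypothesis that every $b\varphi\in X$ has well-ordered support is used to guarantee that $\min supp(\varphi)$, and hence the ``leading coordinate'', exists for each element of $X\cap A^B$; closure of $X$ under products and inverses is automatic from $X\le A\,\mathrm{Wr}\,B$. Everything else is a routine verification.
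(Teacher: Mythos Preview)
Your proof is correct and follows essentially the same approach as the paper: both construct the same lexicographic order (the $B$-component dominates, and on the base subgroup one looks at the value on the least element of the support), with the only difference being that you phrase the verification via the positive-cone axioms while the paper verifies left and right translation invariance of the relation $<$ directly. These are equivalent standard formulations, and your write-up is in fact somewhat cleaner.
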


\begin{proof}
If $b_{1}\varphi_{1}$ and $b_{2}\varphi_{2}$ belong to $X$, then $b_1\varphi_{1} < b_2\varphi_{2}$
is defined as: $  b_1 < b_2 $ or $ b_1 = b_2$  and $\phi_1(b) < \phi_2(b)$ for the least $b$ for which $\phi_1(b) \neq \phi_2(b)$.

Let $U_1$ be the subset of the union $supp(\phi_1) \cup supp(\phi_2)$ consisting of elements $b$ for which $\phi_1(b)<\phi_2(b)$.
Since $supp(\phi_1)$ and $supp(\phi_2)$ are well-ordered, $U_1$ has a least element $u_1$. Similarly define $U_2$ of elements
$U_2$ of elements $\phi_2(b)< \phi_1(b)$ and take its least element $u_2$. Since $B$ is fully ordered, its order is linear and either $u_1 < u_2$, in which case $\phi_1 < \phi_2$, or $u_2<u_1$, in which case $\phi_2 < \phi_1$. The only exception is the case when $\phi_1 = \phi_2$, of course. Thus, the order $<$ is linear on $X$.

We must show that this order is also full order. Let us take an arbitrary element $b_3 \varphi_3 \in X$ and show that
if $b_1\varphi_{1} < b_2\varphi_{2}$, then $b_1 b_3 \varphi_1^{b_3} \varphi_3 < b_2 b_3 \varphi_2^{b_3}\varphi_3$ and $b_3 b_1 \varphi_3^{b_1}\varphi_1<b_3 b_2 \varphi_3^{b_2}\varphi_2$\\

If $b_1<b_2$ then this is obvious. So we need to consider the case when $b_1=b_2, \varphi_1<\varphi_2$. Hence it is sufficient to show that
\begin{equation}
\label{astxanish}
\tag{*}
\varphi_1^{b_3} \varphi_3 < \varphi_2^{b_3}\varphi_3
\end{equation}
and
\begin{equation}
\label{yerku astxanish}
\tag{**}
\varphi_3^{b_1} \varphi_1 < \varphi_3^{b_1}\varphi_2.
\end{equation}
We have:
\begin{align*}
	&\min\{b\in B \mid \varphi_1^{b_3}(b) \varphi_3(b) < \varphi_2^{b_3}(b)\varphi_3(b) \} \\
	&\quad = \min \{b\in B \mid \varphi_1(b b_3^{-1}) < \varphi_2(b b_3^{-1}) \} \\
	&\quad = \min \{b\in B \mid \varphi_1(b) < \varphi_2(b) \}b_3 \\
	&\quad < \min \{b\in B \mid \varphi_1(b) > \varphi_2(b) \}b_3\\
	&\quad = \min\{b\in B \mid \varphi_1^{b_3}(b) \varphi_3(b) \\ 
	&\quad > \varphi_2^{b_3}(b)\varphi_3(b),
\end{align*}
hence (\ref{astxanish}) is proved. And also:
\begin{align*}
	&\min\{b\in B \mid \varphi_3(b) \varphi_1(b) < \varphi_3^{b b_1^{-1}}(b)\varphi_2(b) \} \\
	&\quad = \min \{b\in B \mid \varphi_1(b) < \varphi_2(b) \} \\
	&\quad > \min \{b\in B \mid \varphi_1(b) > \varphi_2(b) \} \\
	&\quad = \min\{b\in B \mid \varphi_3(b) \varphi_1(b) \\
	&\quad > \varphi_3^{b b_1^{-1}}(b)\varphi_2(b),
\end{align*}
hence (\ref{yerku astxanish}) is also proved.
So Lemma 1 is proved.
\end{proof}

\begin{remark}
In the proof of the Lemma $\ref{lemma main}$ we not only proved that the subgroup $X$ can be fully ordered, but also suggested an explicit group order.
Also, if the intersection of $X$ and the first copy of $A$ in $A \mbox{Wr}B$ contains $a_1, a_2 \in A$, then $a_1<a_2$
 in $A$ $\Rightarrow$ $a_1 < a_2$ in $X$.
\end{remark}

\begin{remark}
The requirement of being well-ordered of support in the statement of Lemma $\ref{lemma main}$ is necessary. Moreover, it is true that the
wreath product of two infinite groups can not be fully ordered $\cite{neumann-order}$.
\end{remark}

~\\

Bellow we have constructed an embedding of $\mathbb{Q}$ into a 2-generator subgroup of $ W = (\mathbb{Q} \mbox{~Wr~} C)\mbox{~Wr~} Z$, where
$ C=\langle c \rangle$ is an infinite cyclic group generated by $c$, and $ Z= \langle z \rangle$ is an infinite cyclic group generated by $z$.

Firstly, for each positive integer  $n$, choose in the base subgroup $\mathbb{Q}^{C}$ of the Cartesian wreath product $\mathbb{Q} \mbox{~Wr~} C$
the elements $\varphi_n$ and $\tau_n$:\\
\[  
\varphi_n (c^{i})= \left\{
                      \begin{array}{ll}
                       \frac{1}{n} & \mbox{if $i=0$ ,}\\
                        0 & \mbox{if $i \neq 0 $ ,}
                      \end{array}
                    \right.
\tau_n (c^{i})= \left\{
                      \begin{array}{ll}
                       0 & \mbox{if $i<0$},\\
                       \frac{-1}{n} & \mbox{if $i \geq 0 $ }.
                      \end{array}
                    \right.
\] 
~\\
The reason of such selection is in the following relations:\\
\begin{equation}
[\tau_n, c]= \varphi_n, [\tau_m, \tau_n]=1 \mbox{ for any n, k $>$ 0}.
\end{equation}
~\\
The first of the relations (1) trivially follows from
\[ 
[\tau_n, c](c^{i}) = \tau_n^{-1}\tau_n^{c}(c^i)= \left\{
                      \begin{array}{ll}
                       -0+0=0 & \mbox{if $i<0$},\\
                       \frac{1}{n}+0=\frac{1}{n} & \mbox{if $i = 0 $ },\\
                       \frac{1}{n}+\frac{-1}{n}=0 & \mbox{if $ i>0$}.
                      \end{array}
                    \right.
\] 
~\\
The second of the relations (1) trivially follows from the fact that $\mathbb{Q}^{C}$ is abelian.

~\\
In the base subgroup $(\mathbb{Q} \mbox{~Wr~} C)^Z$ of $W$, take an element $\alpha$ defined as

\[ 
\alpha(z^{j})=\left\{
                      \begin{array}{ll}
                       1=1_{\mathbb{Q}\mbox{~Wr~} C} & \mbox{if $j<0$},\\
                       c & \mbox{if $j = 0 $ },\\
                       \tau_j & \mbox{if $ j>0$}.
                      \end{array}
                    \right.
\]
~\\
Put $G=\langle \alpha, z \rangle$ and define the embedding $\Phi: \mathbb{Q}\rightarrow G$ as
\[ 
\Phi: \frac{m}{n} \mapsto [z^n \alpha z^{-n}, \alpha]^m = [\alpha^{z^{-n}}, \alpha]^m ~\mbox{for any}~ \frac{m}{n} \in \mathbb{Q}, n>0.
\] 
~\\
That $\Phi$ is a homomorphism and an injection could be checked directly. But to avoid very long calculations, we consider the structure of the
commutator $[\alpha^{z^{-n}},\alpha]$ first:

\[ 
[\alpha^{z^{-n}}, \alpha](z^j)= [\alpha(z^{j+n}), \alpha(z^j)]=
                  \left\{
                      \begin{array}{ll}
                       [1,1]=1 & \mbox{if $j<-n$},\\
                       {}[c,1]=1 & \mbox{if $j = -n $ },\\
                       {}[\tau_{j+n},1]=1 & \mbox{if $ -n<j<0$},\\
                       {}[\tau_{n},c]=\varphi_n & \mbox{if $ j=0$},\\
                       {}[\tau_{j+n}, \tau_{j}]=1 & \mbox{if $j>0$}.
                      \end{array}
                    \right.
\] 
~\\
This means that $[\alpha^{z^{-n}},\alpha]$ is nothing else but the image $\phi_{n}^{*}$ of the coordinate element $\varphi_n$ in
the ``the first copy" of the group $\mathbb{Q} \mbox{Wr} C$ in $W$:

\[ 
\varphi_{n}^{*}(z^j)=
 \left\{
                      \begin{array}{ll}
                       \varphi_n & \mbox{if $j=0$},\\
                       1=1_{\mathbb{Q} \mbox{~Wr~} C} & \mbox{if $j \neq 0 $ }.
                      \end{array}
                    \right.
\] 
~\\

Therefore the elements
\[
\Phi(\mathbb{Q})=\{\Phi(\frac{m}{n})=(\varphi_n^*)^m) \mid \frac{m}{n} \in \mathbb{Q}, n > 0 \}
\] 
~\\
do form a subgroup isomorphic to $\mathbb{Q}$ in $G$, and the mapping $\Phi$ is injective. Finally, it easily follows from the equalities
\[ 
\Phi(\frac{1}{n})\Phi(\frac{1}{n^{\prime}})=\varphi^*_n\varphi^*_{n^{\prime}}, \Phi(\frac{m}{n})=(\varphi_n^*)^m=\underbrace{\varphi_n^*+\ldots+\varphi_n^*}_{n ~times}
\\
\] 
that $\Phi$ is a homomorphism.

Each element of $G$ can be presented in the form
\begin{equation}
\label{Yerku}
z^k(\alpha^{z^{k_1}})^{n^1}\cdot \ldots \cdot (\alpha^{z^{k_s}})^{n^s},
\end{equation}
which follows from the fact that
\[
\alpha^n z^m=z^m(\alpha^{z^m})^n.
\]
Let us denote the product of $\alpha$-factors of (2) by $\tilde{\alpha}$.

\begin{equation}
\tilde{\alpha}(z^i) = 1_{\mathbb{Q}\mbox{~Wr~} C}, \mbox{~if $i<\min \{k_1, \ldots ,k_s \}$}.
\end{equation}
It is clear that $G \leq T \mbox{~Wr~} \mathbb{Z}$, where $T=\langle c, \tau_i \mid i\in \mathbb{Z}, i>0 \rangle$ and $supp(\tilde{\alpha})$ is well-ordered.

Hence, by Lemma \ref{lemma main}, in order to show that $G$ is fully ordered with order relation defined in the proof of Lemma \ref{lemma main}, we only need to show that $T$ is fully ordered.\\

Similarly to~\ref{Yerku} each element of $T$ can be presented in the form:
\[
c^{k_0}(\tau_{i_1}^{c^{k_1}})^{n_1}(\tau_{i_2}^{c^{k_2}})^{n_2} \cdot \ldots \cdot (\tau_{i_m}^{c^{k_m}})^{n_m} \mbox{~ where~} k_1 \leq k_2 \leq \ldots \leq k_m,
\]
which follows from $(\tau_{i}^{c^{j}})^n c^k = c^k (\tau_{i}^{c^{k+j}})^n$ and from the fact that $(\tau_{i_p}^{c^{k_p}})^{n_p}$ commutes with $(\tau_{i_q}^{c^{k_q}})^{n_q}$, where $1 \leq p,q \leq m$.
Obviously $T$ is a subgroup of $\mathbb{Q} \mbox{~Wr~} C$,\\

If we denote the product of $\tau$-factors of (3) by $\tilde{\beta}$, then
\begin{equation}
\tilde{\beta}(c^i)
=\left\{
                      \begin{array}{ll}
                       0 & \mbox{~if $i<k_1$},\\
                       \frac{n_1}{i_1}+\frac{n_2}{i_2}+\ldots + \frac{n_s}{i_s} & \mbox{~if $k_s >i \geq k_{s-1} $ }.
                      \end{array}
                    \right.
\end{equation}
~\\
Note that we not only proved that $G$ is fully ordered, but also suggested an explicit order relation. Indeed, we firstly embedded
$\mathbb{Q}$ into a countably generated subgroup $T$ of $\mathbb{Q} \mbox{~Wr~} C$, then embedded $T$ into a $2$-generator subgroup $G$ of
$ (\mathbb{Q} \mbox{~Wr~} C) \mbox{~Wr~} Z $ and it follows from (3) and Lemma~$\ref{lemma main}$ that the first embedding preserves full order, and it follows from (4) and Lemma~$\ref{lemma main}$ that the second embedding also preserves full order.

The embedding $\Phi$ of $\mathbb{Q}$ into $G$ described above is subnormal. As a subgroup of an abelian group $\langle \phi_n \mid n \in \mathbb{N} \rangle $ is normal in the first copy of $\mathbb{Q}$ in $ \mathbb{Q} \mbox{~Wr~} C$, the first copy is normal in $\mathbb{Q}^C$, and $\mathbb{Q}^C$ is a normal subgroup of $\mathbb{Q} \mbox{~Wr~} C$. Hence $\mathbb{Q}^{\Phi}= \langle \phi_n^* \mid n \in \mathbb{N} \rangle$ is subnormal in the first copy of $\mathbb{Q} \mbox{~ Wr ~} C $ in $(\mathbb{Q} \mbox{~ Wr ~} C ) \mbox {~ Wr ~} Z$. Therefore, $\mathbb{Q}^{\Phi}$ is subnormal in $G$, because the first copy of $\mathbb{Q} \mbox{~ Wr ~} C$ in $(\mathbb{Q} \mbox{~ Wr ~} C ) \mbox {~ Wr ~} Z$ is subnormal subgroup of $(\mathbb{Q} \mbox{~ Wr ~} C ) \mbox {~ Wr ~} Z$.

And $G$ is a solvable group of length at most 3 since  $(\mathbb{Q} \mbox{ Wr } C ) \mbox { Wr } Z$ is a solvable group of length 3.

This concludes the proof of Theorem~\ref{th. first embedding}.

~\\
\section{Additional properties of the embedding}
\label{section additional properties}
~\\
Now we will examine additional properties of $\mathbb{Q} $ and $G$.

\begin{property}         
$G$ is torsion free.
\end{property}

\begin{proof}
It follows from the fact that $(\mathbb{Q} \mbox{~Wr~} C) \mbox{~Wr~} Z$ is torsion-free group.
\end{proof}
~\\
Product of two group varieties defined as a variety consisting of all extensions of a group from the first variety by a group of the second variety (see $\cite{neumann-varieties}$). As we know wreath product $A \mbox{~Wr~}B$ of two groups $A$ and $B$ is an extension of the Cartesian product $A^{B}$ by $B$. Hence, the following property is true.

\begin{property}
 $G$ belongs to the variety $\mathfrak{AAA} = \mathfrak{S}_3$, since $\mathbb{Q}$, $C$ and $Z$ are abelian  (by $\mathfrak{S}_3$ we denoted the variety of solvable groups of length 3). This means that $G$ is a solvable group of length 3, as we noted in proof of Theorem~\ref{th. first embedding}. The propositions below show that here $\mathfrak{S}_3$ cannot be replaced by $\mathfrak{S}_2$.
\end{property}

The following propositions show that the embedding of $\mathbb{Q}$ in $G$ is the most economical, in the sense that $\mathbb{Q}$ cannot be embedded into nilpotent group, or in a metabelian group.

\begin{proposition}
$\mathbb{Q} $ cannot be embedded into a finitely generated nilpotent group.
\end{proposition}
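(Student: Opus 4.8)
The plan is to exploit the fact that finitely generated nilpotent groups satisfy the maximal condition on subgroups, and to contrast this with the fact that $\mathbb{Q}$ is not finitely generated. First I would recall that every finitely generated nilpotent group $N$ is polycyclic: taking the lower central series $N=\gamma_1(N)\ge\gamma_2(N)\ge\cdots\ge\gamma_{c+1}(N)=1$, each factor $\gamma_i(N)/\gamma_{i+1}(N)$ is a finitely generated abelian group, since modulo $\gamma_{i+1}(N)$ it is generated by the finitely many basic commutators of weight $i$ in a fixed finite generating set of $N$. Finitely generated abelian groups satisfy the maximal condition on subgroups, and an extension of a group with this condition by another such group again has it; so an easy induction on the nilpotency class $c$ shows that $N$ itself satisfies the maximal condition on subgroups. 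In particular, every subgroup of $N$ is finitely generated.

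Next I would observe that $\mathbb{Q}$ is not finitely generated: any finite set $\{m_1/n_1,\ldots,m_k/n_k\}\subseteq\mathbb{Q}$ generates, as a group, a subgroup contained in $\tfrac{1}{n_1\cdots n_k}\mathbb{Z}$, which is a proper subgroup of $\mathbb{Q}$ because it omits $1/p$ for every prime $p\nmid n_1\cdots n_k$. Hence no finite subset of $\mathbb{Q}$ generates it.

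Combining the two points: if there were an embedding $\varphi\colon\mathbb{Q}\to N$ into a finitely generated nilpotent group $N$, then $\varphi(\mathbb{Q})\cong\mathbb{Q}$ would be a subgroup of $N$ that is not finitely generated, contradicting the conclusion of the first step. Therefore $\mathbb{Q}$ admits no embedding into a finitely generated nilpotent group.

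I do not expect a real obstacle here; the only step needing a word of justification is that a finitely generated nilpotent group satisfies the maximal condition on subgroups (equivalently, is polycyclic), which is the standard fact recalled above and may alternatively be quoted from the literature on polycyclic groups. Note that, unlike the metabelian case treated in the next proposition — and unlike Neumann's example with $Z(p^\infty)$ — here no order-theoretic or wreath-product machinery is needed: the obstruction is purely the finiteness condition enjoyed by finitely generated nilpotent groups.
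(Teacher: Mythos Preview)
Your argument is correct and follows the same route as the paper: both proofs use the fact (due to P.~Hall) that every finitely generated nilpotent group satisfies the maximal condition on subgroups, so all its subgroups are finitely generated, while $\mathbb{Q}$ is not. The only difference is that you sketch a proof of Hall's result via the lower central series, whereas the paper simply cites it.
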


\begin{proof}
 The fact follows from P.Hall's result~\cite{hall-max}, which states that every finitely generated nilpotent group satisfies the maximal condition for subgroups. Thus every subgroup of such a group is finitely generated. Since $\mathbb{Q}$ is not finitely generated, it cannot be embedded into finitely generated nilpotent group.
 \end{proof}

\begin{proposition}
$\mathbb{Q}$ cannot be embedded into a finitely generated metabelian group.
\end{proposition}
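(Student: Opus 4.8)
The plan is to deduce the statement from residual finiteness, and I will also indicate a second, self-contained route in the spirit of Neumann's Lemma~5.3 quoted above. Recall that a group is \emph{residually finite} if the intersection of its finite-index subgroups is trivial, and that this property passes to every subgroup: if $H\le G$ and $N\le G$ has finite index, then $H\cap N$ has finite index in $H$. By a classical theorem of P.~Hall, every finitely generated metabelian group --- indeed every finitely generated abelian-by-nilpotent group --- is residually finite. On the other hand $\mathbb{Q}$ is divisible, so it has no proper subgroup of finite index: if $S\le\mathbb{Q}$ has index $m<\infty$ then $m\mathbb{Q}\subseteq S$, while $m\mathbb{Q}=\mathbb{Q}$. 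Hence $\mathbb{Q}$ is not residually finite, and therefore cannot be embedded into any finitely generated metabelian group. (This also re-proves Neumann's Lemma~5.3, since $Z(p^\infty)$ is not residually finite either.)

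A self-contained route paralleling \cite{NeumannHNeumann} runs as follows. Suppose $\mathbb{Q}\le G$ with $G$ finitely generated metabelian. Since $\mathbb{Q}$ is divisible it maps trivially into the finitely generated abelian group $G/G'$, so $\mathbb{Q}\le A:=G'$, which is abelian and normal; thus $A$ is a module over $R:=\mathbb{Z}[G/A]$, a ring that is Noetherian because it is a finitely generated commutative ring. Writing $\mathbb{Q}=\bigcup_{n\ge1}\frac{1}{n!}\mathbb{Z}$ and using $(n{+}1)\cdot\frac{1}{(n+1)!}=\frac{1}{n!}$, one gets an ascending chain of cyclic $R$-submodules $R\cdot 1\subseteq R\cdot\frac12\subseteq R\cdot\frac16\subseteq\cdots$ of $A$; these are normal subgroups of $G$, so by Hall's maximal condition on normal subgroups of finitely generated metabelian groups \cite{hall-max} the chain stabilises, say from the $m$-th term on. A short computation then shows that every integer $>m$ becomes a unit in the quotient ring $\overline{R}:=R/\operatorname{Ann}_R\!\big(\tfrac{1}{m!}\big)$, which is a nonzero finitely generated $\mathbb{Z}$-algebra, of characteristic $0$ because $\mathbb{Q}$ is torsion-free. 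But in a nonzero finitely generated $\mathbb{Z}$-algebra only finitely many primes can be invertible: the image of $\operatorname{Spec}\overline{R}\to\operatorname{Spec}\mathbb{Z}$ is constructible by Chevalley's theorem and contains the generic point (as $\overline{R}$ has characteristic $0$), hence is cofinite, so all but finitely many primes lie in some maximal ideal of $\overline{R}$. Choosing such a prime $q>m$ produces a unit lying in a proper ideal --- a contradiction.

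The step with genuine content in this second approach is the last one, and it is exactly the point where $\mathbb{Q}$ differs from the quasi-cyclic group: for $Z(p^\infty)=\bigcup Z_n$ the normal closures of the finite subgroups $Z_n$ form a chain that is automatically \emph{strictly} increasing, because $R\cdot Z_n$ has bounded exponent $p^n$; for the torsion-free group $\mathbb{Q}$ no exponent bound is available, and one must instead rule out the possibility that all large primes become invertible in a finitely generated coefficient ring, which is where the arithmetic of finitely generated $\mathbb{Z}$-algebras (a weak Nullstellensatz, via Chevalley) enters. If one accepts Hall's residual-finiteness theorem, the first approach avoids this difficulty altogether, and it is the argument I would present.
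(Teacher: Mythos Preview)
Your primary argument is correct and follows a different route from the paper's. The paper works directly with Hall's maximal condition for normal subgroups \cite{hall-max}: from max-$n$ it argues that $G'\cap\mathbb{Q}$ is finitely generated (hence cyclic), and then derives a contradiction because $\mathbb{Q}G'/G'\cong\mathbb{Q}/(G'\cap\mathbb{Q})$ must be finitely generated as a subgroup of the finitely generated abelian group $G/G'$, while $\mathbb{Q}$ modulo a cyclic subgroup never is. You instead invoke a different theorem of Hall --- residual finiteness of finitely generated abelian-by-nilpotent groups --- together with the divisibility of $\mathbb{Q}$; this is shorter, more conceptual, and handles every divisible subgroup (including $Z(p^\infty)$) in one stroke. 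Your second, module-theoretic route is closer in spirit to the paper's in that it also rests on max-$n$, but it replaces the paper's elementary quotient argument with a more elaborate one about units in finitely generated $\mathbb{Z}$-algebras; its virtue is being self-contained modulo \cite{hall-max}, whereas your first route imports a second nontrivial Hall theorem.
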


\begin{proof}
Suppose that the converse is true, that is to say $\mathbb{Q}$ is embedded into a finitely generated metabelian group $G$.
Evidently, a finitely generated abelian group satisfies the maximal condition for subgroups, also by the P.Hall's result~\cite{hall-max}, finitely generated metabelian groups satisfy the maximal condition for normal subgroups. Therefore the commutator $G'$ is finitely generated and hence $G' \cap \mathbb{Q}$ is finitely generated (therefore cyclic).

$G/G'$ is a finitely generated abelian group, therefore the subgroup $\mathbb{Q} G'/ G'$ of $G/G'$ also is finitely generated. But we have $\mathbb{Q} G' / G' \simeq \mathbb{Q} / G' \cap \mathbb{Q}$, which is not finitely generated because $\mathbb{Q}$ is not finitely generated. Contradiction.
\end{proof}


\section{The verbal embedding of $\mathbb{Q}$}
\label{section verbal embedding}

For definitions and basic facts about non-trivial word sets, verbal subgroups and embeddings see Introduction above and literature cited there.

\begin{theorem}
\label{th second embedding}
For any non-trivial set of words $V$ there is an order-preserving subnormal embedding of the additive group of rational numbers $\mathbb{Q}$ into a fully ordered 2-generator  group $G$.
\end{theorem}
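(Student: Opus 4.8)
The plan is to reduce the verbal case to the construction already carried out in Theorem~\ref{th. first embedding} by inserting one more wreath-product layer. Recall that in Section~\ref{section first embedding} the group $\mathbb{Q}$ was embedded, in an order-preserving and subnormal way, into a $2$-generator subgroup $G_0$ of $(\mathbb{Q}\mbox{~Wr~}C)\mbox{~Wr~}Z$; moreover that copy of $\mathbb{Q}$ sat inside the first copy of $\mathbb{Q}$ in the innermost base group, in particular deep inside the commutator subgroup of $(\mathbb{Q}\mbox{~Wr~}C)\mbox{~Wr~}Z$. The first step is to record the elementary observation that for any non-trivial word set $V$ the verbal subgroup $V(A\mbox{~Wr~}B)$ contains a copy of $A$ in a controlled position: if $v\in V$ has length $\ell$ and involves $d$ variables, then evaluating $v$ on suitable elements of the base group $A^B$ and of $B$ produces, in the base group, a function whose single nonzero value can be made an arbitrary element of $A$ (using that $A^B$ is a direct power and that $v$ is not a law in the infinite cyclic group, or, if it is, using commutators of translates as in the relations~(1) above). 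Thus there is a subgroup $\bar A\cong A$ with $\bar A\subseteq V(A\mbox{~Wr~}B)$ and $\bar A$ lying in a single coordinate of $A^B$.

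Second, I would apply this with $A=(\mathbb{Q}\mbox{~Wr~}C)\mbox{~Wr~}Z$ and $B=Z'$ an extra infinite cyclic group, forming $W_1=A\mbox{~Wr~}Z'$. Inside $W_1$ there is a copy $\bar A$ of $A$ lying in one coordinate of the base group $A^{Z'}$ and contained in $V(W_1)$; transporting the embedding of Theorem~\ref{th. first embedding} through $A\cong\bar A$ gives a copy of $\mathbb{Q}$ inside $V(W_1)$. Now I must cut $W_1$ down to a $2$-generator subgroup still containing this copy of $\mathbb{Q}$. For this I would pick a finite generating set $a_1,\dots,a_r$ of the relevant $2$-generator subgroup $G_0\le A$ together with the generators $z,\alpha$, realise each $a_i$ as a word value via the mechanism of the first step, and let $G\le W_1$ be generated by $z'$ and a single element $\gamma$ of the base group $A^{Z'}$ whose translates under $\langle z'\rangle$ recover all the $a_i$ in separate coordinates — exactly the packing trick used in~\cite{mikaelian05} to pass from a countably generated subgroup to a $2$-generator one. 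One checks $\mathbb{Q}^{\Phi}\subseteq V(G)$, because the word values witnessing membership in $V(W_1)$ can be taken with arguments inside $G$.

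Third, the order and the subnormality transfer essentially for free. For the order: $G\le A\mbox{~Wr~}Z'$ and, by equation~(3)-type arguments as in Section~\ref{section first embedding}, every element of $G$ has support (as a function on $Z'$) well-ordered, because the generators do and products of translates only shift and add finitely many coordinates; since $A$ itself is fully ordered (it is an iterated wreath product of fully ordered groups whose elements have well-ordered support, so Lemma~\ref{lemma main} applies repeatedly), Lemma~\ref{lemma main} gives a full order on $G$, and by the Remark following Lemma~\ref{lemma main} this order restricts on the embedded $\mathbb{Q}$ to the order inherited from $A$, which in turn restricts to the natural order of $\mathbb{Q}$ by the same Remark applied in Section~\ref{section first embedding}. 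For subnormality: $\bar A$ is subnormal in $W_1$ (base coordinate $\trianglelefteq$ base group $\trianglelefteq$ $W_1$), the copy of $\mathbb{Q}$ is subnormal in $\bar A$ by Theorem~\ref{th. first embedding}, and $G$ contains $\mathbb{Q}^{\Phi}$ with $\mathbb{Q}^{\Phi}$ lying in $G\cap(\text{base})$ in a way that is normalised step by step, so a finite subnormal series results.

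The main obstacle is the first step: showing that an arbitrary non-trivial $V$ can produce a prescribed element of $A$ in a single coordinate of a wreath-product base group. If some $v\in V$ is a non-law in abelian groups this is immediate by substituting base elements; the genuinely delicate case is when every $v\in V$ vanishes on all abelian groups (e.g.\ $V$ a commutator word), and then one must iterate the commutator-with-shift identity $[\tau_n,c]=\varphi_n$ from~(1) inside enough wreath layers to realise $v$ as yielding an arbitrary rational in one coordinate — i.e.\ choosing the number of extra $\mbox{~Wr~}$ factors as a function of the nilpotency-type complexity of the lowest-degree word in $V$. Verifying that this can always be done for an \emph{arbitrary} non-trivial $V$, while keeping the ambient group $2$-generated and fully orderable, is where the real work lies; everything after that is bookkeeping with Lemma~\ref{lemma main} and the packing construction.
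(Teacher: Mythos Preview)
Your proposal correctly isolates the only real difficulty --- producing, for an arbitrary non-trivial $V$, a verbal-subgroup element that picks out a prescribed rational in one coordinate --- but it does not actually resolve it. The suggestion of ``iterating the commutator-with-shift identity inside enough wreath layers, chosen as a function of the nilpotency-type complexity of $V$'' is a programme, not a proof: you would need to specify how many layers, what the substitutions are, and why the resulting element lands in a single coordinate with an arbitrary value, and none of this is done. Without that, the verbal part of the theorem is simply unproved.

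The paper's solution to precisely this obstacle is quite different from what you sketch and worth knowing. Rather than piling wreath products over cyclic groups on top of the Theorem~\ref{th. first embedding} construction, it changes the \emph{innermost} active group: one forms $\mathbb{Q}\mbox{~Wr~}S$ where $S=F_k(\mathfrak{N}_c)$ is a free nilpotent group of the least class $c$ not contained in the variety defined by $V$ (such $c$ exists since free groups are residually nilpotent). Then $V(S)\neq 1$, so one fixes a positive $a\in V(S)$, writes it as a product of word values $a=\prod v_i(a_{i1},\dots,a_{it_i})^{\varepsilon_i}$, and works inside the subgroup $T=\langle \chi_n,\,a_{ij}\rangle$ of $\mathbb{Q}\mbox{~Wr~}S$. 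The key one-line trick is that for the obvious base element $\chi_n$ one has
\[
a^{-1}\,a^{\chi_n}=\psi_n,
\]
the function with value $1/n$ at $1_S$ and $0$ elsewhere; since $a\in V(T)$ and $V(T)\trianglelefteq T$, this gives $\psi_n\in V(T)$, hence $\mathbb{Q}\hookrightarrow V(T)$. After that the paper proceeds with two further wreath layers $T\mbox{~Wr~}C$ and then $\mbox{~Wr~}Z$ (the second to reach a $2$-generator group via the packing element $\omega$), applying Lemma~\ref{lemma main} at each stage for the order. So the architecture is $\mathbb{Q}\mbox{~Wr~}S$ followed by two cyclic wreath layers, not the Theorem~\ref{th. first embedding} group followed by extra cyclic layers; and the verbal step is handled by the normality-of-$V(T)$ conjugation trick rather than by iterated commutator-shift identities.
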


For embedding construction purposes we need to find a fully ordered torsion free nilpotent group $S$ with a non-trivial positive element $a \in V(S)$, as it is done in $\cite{mikaelian02}$.  As a such group we take $S= F_k (\mathfrak{N}_c)$, where $c$ is the least integer, such that $\mathfrak{N}_c$ is not contained in the variety defined by $V$ and $k$ is such that $S \notin var(F_{\infty} / V(F_{\infty}))$.  A full order relation can be defined in $S$ (see $\cite{mikaelian02}$, we omit the routine details to much shorten the proof since the exact method of construction of that embedding is immaterial for purposes of this proof).

We take an arbitrary non-trivial element $a \in V(S) \neq {1}$. In any case we can assume $a$ to be positive ($1 < a$), for we  always are in position to replace our order relation $<$ by the inverse relation $<^{-1}$ (for details see $\cite{mikaelian02}$).

As an element of $V(S)$ our element $a$ has the presentation
\[
   a= (v_1(a_{11},\ldots, a_{1t_1}))^{\varepsilon_1} \cdots (v_d(a_{d1}, \ldots, a_{dt_d}))^{\varepsilon_d}
\]
where $\varepsilon_i = \pm 1, v_i \in V, a_{ij} \in S (i = 1, \ldots, d; j = 1, \ldots, t_i).$

Now let us consider the Cartesian wreath product $\mathbb{Q} \mbox{~Wr~} S$ and for each positive integer $n$ define an element $\chi_n$ and $\psi_n$ as follows:
$$
\psi_n(s) =  \left\{
                     \begin{array}{ll}
                      \frac{1}{n} & \mbox{if $s=1$,}\\
                       0 & \mbox{otherwise ,}
                      \end{array}
                    \right.
 \chi_n(s) =  \left\{
                      \begin{array}{ll}
                      \frac{1}{n} & \mbox{if $s=a^i, i = 0,1,2,\ldots ,$}\\
                        0 & \mbox{otherwise .}
                      \end{array}
                   \right.
$$
Let us consider the subgroup of $\mathbb{Q} \mbox{~Wr~} S$
\[
    T= \langle \chi_n, a_{ij} \mid n \in \mathbb{N}, i = 1, \ldots, d; j = 1, \ldots, t_i \rangle
\]
\begin{lemma}
\label{lemma verbal embedding}
Let $V$ be an arbitrary non-trivial word set and $T=T(\mathbb{Q},V)$ is that constructed above. Then:\\
~(1) $\mathbb{Q}$ can be embedded in $T$ such that its image lies in $V(T)$.\\
~(2) $T$ can be fully ordered, such that the order of $\mathbb{Q}$ will be preserved by the embedding.
\end{lemma}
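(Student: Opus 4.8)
The plan is to mirror the two-step structure of the proof of Theorem~\ref{th. first embedding}, replacing the cyclic group $C$ by the nilpotent group $S$ and the coordinate element $\varphi_n$ by $\chi_n$. For part~(1), I would first record the key relation that makes $\mathbb{Q}$ appear inside $V(T)$: conjugating $\chi_n$ by the positive element $a\in V(S)$ shifts its support by one step, so that $\chi_n - \chi_n^{a} = \psi_n$ (the function supported only at $s=1$ with value $\tfrac1n$), exactly as $[\tau_n,c]=\varphi_n$ in the first construction. Writing $a$ in terms of the generators $a_{ij}$ via the given expression $a=(v_1(\ldots))^{\varepsilon_1}\cdots(v_d(\ldots))^{\varepsilon_d}$, the conjugate $\chi_n^{a}$, hence the difference $\psi_n=\chi_n-\chi_n^{a}$, is a product of conjugates of the $v_i$'s applied to elements of $T$ times their inverses — i.e.\ $\psi_n\in V(T)$. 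Then I would define $\Psi\colon \mathbb{Q}\to T$ by $\Psi(\tfrac{m}{n})=(\psi_n)^m$ (additively, $m\psi_n$), check as in Section~\ref{section first embedding} that $\Psi$ is a well-defined homomorphism using $\psi_n+\psi_{n'}=\psi$-type identities and $m\psi_n=\psi_{n/\gcd}$ relations inside the abelian base group $\mathbb{Q}^{S}$, and that it is injective because the value of $\Psi(\tfrac mn)$ at $s=1$ is $\tfrac mn$. Since each $\psi_n\in V(T)$, the image lies in $V(T)$.

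For part~(2), I would invoke Lemma~\ref{lemma main}: $T\le \mathbb{Q}\mathbin{\mbox{~Wr~}}S$, and $S$ is fully orderable (being torsion-free nilpotent, or by the explicit order from~\cite{mikaelian02}), so it suffices to check that every element $f\in T$ has well-ordered support in $S$. Here I would argue exactly as with $\tilde\alpha$ and $\tilde\beta$ in Section~\ref{section first embedding}: a general element of $T$ is a finite product of conjugates $\chi_n^{s}$ (for various $s\in S$ lying in the subgroup generated by the $a_{ij}$) and elements $a_{ij}$ of $S$ itself; bringing the $S$-part to the front, the base-group part is a finite sum of translates of the $\chi_n$'s, each of which is supported on a set of the form $\{a^i : i\ge 0\}\cdot s$. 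A finite union of such sets is well-ordered in any full order on $S$ in which $a>1$ — indeed $\{a^i:i\ge0\}$ is order-isomorphic to $\omega$ since $a$ is positive, and a finite union of well-ordered subsets of a linearly ordered set is well-ordered. This is the one place where positivity of $a$ (arranged above, possibly after replacing $<$ by $<^{-1}$) is essential. Lemma~\ref{lemma main} and its first Remark then give a full order on $T$ for which the inclusion of the first copy of $\mathbb{Q}$ — hence the image $\Psi(\mathbb{Q})$, which sits inside that copy — is order-preserving, matching the natural order of $\mathbb{Q}$.

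The main obstacle I anticipate is the bookkeeping in part~(2): unlike the cyclic case, $S$ is non-commutative, so one cannot simply sort conjugating exponents into increasing order as was done for $T\le\mathbb{Q}\mathbin{\mbox{~Wr~}}C$. I would handle this by not attempting a canonical form at all, but instead observing directly that for \emph{any} finite product representing an element of $T$, the $\mathbb{Q}^{S}$-component is a finite sum $\sum_k c_k\cdot(\chi_{n_k})^{s_k}$ of translates, so its support is contained in a finite union $\bigcup_k \{a^i:i\ge0\}s_k$, and then quote the elementary fact that such a finite union is well-ordered whenever $a>1$. No explicit normal form for $T$ is needed for this, which keeps the argument short. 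The verification that $\psi_n\in V(T)$ in part~(1) is essentially formal once the relation $\psi_n=\chi_n-\chi_n^{a}$ is in hand, and the homomorphism/injectivity checks are the same routine computations carried out in Section~\ref{section first embedding}, so I would state them briefly and refer back there.
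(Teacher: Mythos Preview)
Your proposal is correct and follows essentially the same route as the paper: the embedding $\tfrac{m}{n}\mapsto \psi_n^m$, the identity $\psi_n=\chi_n-\chi_n^{a}$, and the well-ordered--support argument via Lemma~\ref{lemma main} are exactly what the paper does. The one place to tighten is your justification that $\psi_n\in V(T)$: rather than the somewhat opaque claim about ``$\chi_n^{a}$, hence the difference'' being a product of conjugates of $v_i$-values, the paper simply rewrites $\psi_n=\chi_n-\chi_n^{a}=[a,\chi_n]=a^{-1}a^{\chi_n}$ and observes that $a\in V(T)$ (since the $a_{ij}$ lie in $T$) together with $V(T)\trianglelefteq T$ immediately gives $\psi_n\in V(T)$.
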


\begin{proof}
For the proof of the first part we just need to notice that $ \frac{m}{n} \mapsto \psi_n^m $ is an embedding of $\mathbb{Q}$ into $T$ and $\psi_n = a^{-1}a^{\chi_n} \in V(T)$, because $a \in V(T)$ and $V(T)$ is normal in $T$.

For the proof of the second part let us take an arbitrary element of $T$\\
\begin{equation}
    x_1a_1x_2a_2 \ldots x_ka_k
\end{equation}
where $k$ is some integer, $x_i \in  \langle \chi_n \mid n \in \mathbb{N} \rangle$ $i= 1,\ldots ,k$ and $a_j \in \langle a_{pq} \mid p=1, \ldots, d; q = 1, \ldots t_p \rangle $, $i,j = 1, \ldots, k$. Obviously each element of $T$ can be presented in this form. Also it is obvious that $supp(x_i), i= 1, \ldots , n$ is well-ordered. Now let us transform the presentation (5) to the from:

\begin{equation}
    a_1a_2 \ldots a_k x_1^{a_1a_2 \ldots a_k}x_2^{a_2a_3 \ldots a_k} \ldots x_k
\end{equation}
~\\
It is clear that $supp( x_1^{a_1a_2 \ldots a_k} x_2^{a_2a_3 \ldots a_k} \ldots x_k ) \subseteq \bigcup_{i=1}^k supp(x_i^{ a_i \ldots a_k})$. Now the proof follows from the Lemma $\ref{lemma main}$, by taking into the account the fact that a finite union of well-ordered sets is well-ordered (thanks to the fact that the active group of the wreath product is linearly ordered, since it is fully ordered).
\end{proof}

For the later use let us note the following commutator identities: $[\chi_i, \chi_j]=1$, $[a, \chi_i]=\psi_i$.

~\\

The next step is to embed $T$ into a subgroup of the Cartesian wreath product $ T \mbox{~Wr~} C$, where $C=\langle c \rangle$ is an infinite cyclic group.
Let us denote by $\rho_g$ the element of the first copy of $T$ in base group $T^C$ corresponding to $g \in T$. In addition, define
\[
\pi_g(c^i) =  \left\{
                      \begin{array}{ll}
                       g & \mbox{if $i \geq 0$,}\\
                        1 & \mbox{otherwise .}
                      \end{array}
                    \right.
\]
~\\
Then $[ \pi_{g^{-1}}, c] = \rho_g$ and it is important to note that the first copy of $T$ lies in the derived subgroup of the group
\[
D = \langle \pi_g, c \mid g \in T \rangle.
\]
~\\
So we can embed $T$ into $D$ by the rule $g \mapsto \rho_g $, for all $g \in T$.
Now we should show that $D$ can be fully ordered. Using the same transformation, that we have described above we can present every element of $D$ in the form
\[
    d = c^k \pi_{g_1}^{c^{k_1}}\pi_{g_2}^{c^{k_2}}\cdot \ldots \cdot \pi_{g_n}^{c^{k_n}}
\]

It is easy to see that the support of the ``right part" of $D$ is well ordered, therefore by Lemma \ref{lemma main}, $D$ can be fully ordered. Now it can be checked directly, that the order described in the proof of Lemma \ref{lemma main} is preserved by the above described embedding.

Obviously $D$ is countable. Let us enumerate the elements of $D$, such that
\[
D = \{ d_0, d_1, \ldots, d_n, \ldots ; n\in \mathbb{N} \}.
\]
Define an element $\omega$ in $D^Z$:
\[
\omega(z^i) =  \left\{
                      \begin{array}{ll}
                       d_k & \mbox{if $i = 2^k, k = 0, 1,2, \ldots$,}\\
                        1 & \mbox{otherwise .}
                      \end{array}
                    \right.
\]

For arbitrary $d_n$ (that is, for every $n$) $\omega^{(z^{-2^n})}(1) = d_n$ holds. So for each pair $d_n$ and $d_m$ we have
\[
      [ \omega^{(z^{-2^n})}, \omega^{(z^{-2^m})} ](1) = [d_n, d_m].
\]
Furthermore, for arbitrary $j \neq 0$,
\[
      [ \omega^{(z^{-2^n})}, \omega^{(z^{-2^m})} ](z^j) = 1
\]
(see \cite{mikaelian02}). Thus every element of the derived subgroup $D'$ belongs to the derived subgroup of a $2$-generator group,
\[
  G=\langle \omega , z \rangle.
\]

Now by using the same arguments as it was mentioned above, we can check that $G$ can be fully ordered, such that the embedding of $\mathbb{Q}$ into $G$ preserves the ``natural" order of $\mathbb{Q}$. Indeed, each element of $G$ can be presented in the form
\[
z^k (\omega^{z^{k_1}})^{n^1}\cdot \ldots \cdot (\omega^{z^{k_s}})^{n^s}
\]
and $\left[ (\omega^{z^{k_1}})^{n^1}  \cdots   (\omega^{z^{k_s}})^{n^s} \right](z^i)=1_{T \mbox{~Wr~} C}$, if $i < \min\{k_1, \ldots, k_s\}$.
Hence, supports of the ``right parts" of the elements of $G$ are well ordered, thus by Lemma \ref{lemma main} $G$ can be fully ordered.

Subnormality of the embedding of $\mathbb{Q}$ described in this section can be shown analogously to the similar property of the embedding from the Section \ref{section first embedding}, again by taking into account the fact that the first copy of the passive group is subnormal in wreath product.

This concludes the proof of Theorem~\ref{th second embedding}. \\

Observing the steps of the embedding construction, it is easy to note that the $2$-generator group $G$ belongs to the variety $\mathfrak{A} \mathfrak{N}_c \mathfrak{A} \mathfrak{A} \subseteq \mathfrak{S}_{c+3}$, therefore $G$ is a solvable group of length at most $c+3$.

{\footnotesize
\vskip6mm

Arman Darbinyan

Department of Informatics and Applied Mathematics

Yerevan State University

Alex Manoogian 1

0025 Yerevan, Armenia

Email: arman.darbin@gmail.com

\vskip3mm

Vahagn H. Mikaelian

Department of Informatics and Applied Mathematics

Yerevan State University

Alex Manoogian 1

0025 Yerevan, Armenia

Email: v.mikaelian@gmail.com
}


~\\

\end{document}